\newtheorem{theorem}{Theorem}[section]
\theoremstyle{definition}
\theoremstyle{remark}
\numberwithin{equation}{section}
\def\vphi{\varphi}
\def\Acal{\mathcal A}
\def\Tcal{\mathcal T}
\def\Vcal{\mathcal V}
\begin{document}
\setcounter{page}{1}

\title[Independence]{On the dependence of conditions\\ in the linear mapping definition}

\author[Aslanbek Naziev]{Aslanbek Naziev$^1$}

\address{$^{1}$ Department of Mathematics, Ryazan State University, Svobody str., 46, Ryazan, 390000, Russian Federation; Ryazan Institute for Education Development, Firsova str., 2a, Ryazan, 390023, Russian Federation; State University for Humanities and Social Studies, Zeleonaya str 30, Kolomna, Moscow reg., 140411, Russian Federation.}
\email{a.naziev@365.rsu.edu.ru}


\subjclass[2010]{Primary 54A35; Secondary 03E35.}

\keywords{Vector space, additivity, homogeneity, linear mapping, linear (in)dependence}

\begin{abstract}
We study the (in)dependence of additivity and homogeneity conditions in the definition of linear mappings between vector spaces over the same scalar field. Unlike other works on the subject, dealing with particular fields like real or complex numbers, or with particular mappings like continuous or measurable, we consider the general case. This enables us to obtain complete picture. Namely, for the prime field, and only in this case, the conditions are dependent (additivity implies homogeneity). For the non-prime field they are independent: neither of conditions implies the other.
\end{abstract}

\maketitle

\section{Introduction and preliminaries}

\noindent By definition, a statement $\vphi$ is independent of a set of statements $\Acal$ in a theory $\Tcal$ if neither $\vphi$ nor its negation $\lnot\vphi$ is a consequence of $\Acal$ in $\Tcal$. In case $\Acal=\{\psi\}$, they say that statements $\vphi$ and $\psi$ are independent in theory $\Tcal$. Due to the contraposition law, this means that neither of $\vphi$, $\psi$ implies the other in theory~$\Tcal$.

In the XX century, the problems of independence attracted sufficient large attention. Great mathematicians K.~G\"{o}del and P.~Cohen proved very hard theorems about the independence of the axiom of choice and continuum hypothesis from other axioms of set theory (\cite{God}, \cite{Coh}, see also \cite{Kun}, \cite{Ros}). There were also proven results of another type, namely, of consistency (see, for example, \cite{Spe}). Nevertheless, some problems looking simple remain unsolved. Among such problems is the (in)dependence of conditions in the definition of a linear mapping.

Take as $\Tcal$ the theory $\Vcal(F)$ of vector spaces over one and the same scalar field~$F$. Let $U$ and $V$ be vector spaces over $F$. As known, a mapping $\varphi\colon U\to V$ is linear iff it satisfies two conditions;
\begin{enumerate}
\item[(A)]
For all $u_1$, $u_2$ from $U$, $f(u_1+u_2)=f(u_1)+f(u_2)$ (additivity);
\item[(H)]
For every $u$ from $U$ and every $\lambda$ from $F$, $\varphi(\lambda u) = \lambda\varphi(u)$ (homogeneity).
\end{enumerate}
In the light of previous remarks, the following problem arises: are the conditions (A) and (H) (in)dependent in theory $\Vcal(F)$?

To some, the question may seem strange or uninteresting, but this impression is deceptive. This question arises naturally every time when one needs to verify whether given mapping is linear. In such cases, one needs to make sure that BOTH conditions that make up the definition of a linear mapping are satisfied. But if one of the conditions implies the other, then it is preferable to establish only one condition and get another condition as a consequence of the first one. So naturally the questions arise, ``Maybe, (A) implies (H)?'' or, ``Maybe, (H) implies (A)?''

In this regard, I must emphasize that there is absolutely no reason to consider either or both of these questions trivial. To understand this, just look at the semantic structure of conditions. Condition (A) includes two vector variables, while (H) includes one vector and one scalar variable. Where can hopes for the triviality of connections between such different conditions come from? Indeed, the proofs of the corresponding theorems given below, although not very complicated, are not so trivial either.

The equation
\[
  f(a+b) = f(a) + f(b)
\]
in the context of functions of real or complex variables is known as the Cauchy (functional) equation. So, additivity condition can be reasonable named the generalized Cauchy (functional) equation. In the theory of Cauchy equation naturally arises the question about properties of the Cauchy equation solutions. From this point of view, the question,
\begin{quote}
Does additivity imply homogeneity?
\end{quote}
get the interpretation,
\begin{quote}
Do the solutions of Cauchy equation have the homogeneity property?
\end{quote}

From the other hand, homogeneity condition arises in many contexts and, among them, in the context of ordinary differential equations theory. In this context the question whether homogeneity implies additivity appears as a question about properties of solutions of homogeneous ODE's. Problems about these properties are ordinary in the courses of ODE's.

Transmission of these questions to the linear algebra reveals an interesting fact. In~\cite{Brad} is described one case in which students of a linear algebra class expressed the confidence that homogeneity implies additivity. My teaching practise shows that this is quite ordinary occurrence. The explanation of this phenomenon lies in the practise of teaching school mathematics. In this practice linear function --- that is the function of the form $y=ax+b$. At the same time the function of the form $y=kx$ is named homogeneous (linear function). By this reason, the word `homogeneous' in the mind of most students is associated with the functions of the form $y=kx$. Because these functions have the property of additivity, most students are sure that homogeneity (evidently to them!) implies additivity. This example already shows how important is to explicitly consider and express the interrelation between the two properties.

Despite the seeming simplicity of the problem, it remained, until now, somewhat unsolved. As we already have said, traditionally, the question ``whether (A) implies (H)?'' is considered in the context of Cauchy equation theory, predominantly in the cases $F=\mathbb R$ or $\mathbb C$ under some additional assumptions like continuity (see, for example, \cite{Acz, Brad, Kuc, JMat}). The reverse question ordinarily arises in the context of ODE in exercises connected with the notion of homogeneous equation.

Contrary to these works, we consider here the general case of arbitrary field $F$ obtaining the following complete picture. Namely, in the case of prime $F$, and only in this case, additivity implies homogeneity, while homogeneity never imply additivity.

In other words,
\begin{itemize}
\item for the prime field $F$, (A) implies (H);
\item for the non-prime field $F$, (A) not implies (H);
\item for every field, whether prime or non-prime, (H) not implies (A).
\end{itemize}
So, for the prime field $F$, the conditions (A) and (H) are dependent. For the non-prime field, they are independent.

\section{Main results}
\subsection{From (A) to (H)}
Recall (see, e. g., {\cite{MB}}) that the prime subfield of a field $F$ is the smallest subfield of $F$, and that a field $F$ is a prime field if it coincides with its prime subfield. Recall also that every prime field is isomorphic to $\mathbb Q$ or ${\mathbb Z}_p (={\mathbb Z}/p{\mathbb Z})$ for some prime number $p$.

\begin{theorem}
Let $F$ be a field, and $k$ its prime subfield.

If $F=k$, then additivity implies homogeneity: every additive mapping $U\to V$ with vector $F$-spaces $U$, $V$,
is $F$-homogeneous (and therefore $F$-linear).

If $F\ne k$, then additivity does not imply homogeneity: there exist vector $F$-spaces $U$ and $V$ and additive mapping
$U\to V$, which is not $F$-homogeneous (and therefore not $F$-linear).
\end{theorem}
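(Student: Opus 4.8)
The plan is to prove the two statements separately. For the first, I want to show that additivity forces homogeneity when $F$ is prime. The key observation is that additivity already gives homogeneity for integer (and then rational) scalars, and when $F$ is prime, every scalar is either an integer image (characteristic $p$ case) or a rational (characteristic $0$ case).

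Let me sketch the first part. First I would establish that any additive map $\varphi$ satisfies $\varphi(nu) = n\varphi(u)$ for every positive integer $n$; this is a one-line induction from $\varphi(u_1+u_2)=\varphi(u_1)+\varphi(u_2)$. Next I would handle $n=0$ and negatives: from $\varphi(0+0)=\varphi(0)+\varphi(0)$ we get $\varphi(0)=0$, and from $\varphi(u)+\varphi(-u)=\varphi(0)=0$ we get $\varphi(-u)=-\varphi(u)$, so $\varphi(nu)=n\varphi(u)$ for all $n\in\mathbb Z$.

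Now I split on the characteristic.

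\begin{itemize}
\item[] If $\operatorname{char} F = p > 0$ and $F=k$, then $F\cong\mathbb Z_p$, so every scalar $\lambda$ is the image of some integer $n$ under the canonical map $\mathbb Z\to\mathbb Z_p$. Since $\lambda u = (nu)$ interpreted via the $\mathbb Z$-action, homogeneity $\varphi(\lambda u)=\lambda\varphi(u)$ follows from the integer case.
\item[] If $\operatorname{char} F = 0$ and $F=k$, then $F\cong\mathbb Q$. I extend integer homogeneity to rationals: for $\lambda = m/n$ with $n\neq 0$, apply the integer identity $n\varphi((m/n)u) = \varphi(m u) = m\varphi(u)$, then divide by $n$ (legitimate since $n\neq 0$ is invertible in $\mathbb Q$) to conclude $\varphi((m/n)u) = (m/n)\varphi(u)$.
\end{itemize}

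This completes the first claim. The second claim, that $F\neq k$ admits a non-homogeneous additive map, is where the real content lies, and I expect it to be the main obstacle. The idea is to exploit that $F$ is a vector space over its prime subfield $k$ of dimension $>1$: choose a $k$-basis of $F$, pick a $k$-linear map $F\to F$ that is \emph{not} $F$-linear (for instance one that is not multiplication by a fixed scalar), and use it to build the desired example. Concretely, I would take $U=V=F$ as $F$-vector spaces and let $\varphi\colon F\to F$ be a $k$-linear endomorphism of $F$ (viewed as a $k$-space) that fails to be $F$-linear. Such a map is automatically additive, since $k$-linearity includes additivity, yet by construction it violates $F$-homogeneity for some scalar. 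The existence of such a $k$-linear but not $F$-linear map is precisely where $\dim_k F > 1$ is used: the $F$-linear maps $F\to F$ form a $1$-dimensional $F$-space (the scalar multiplications), whereas the $k$-linear maps form a space of $k$-dimension $(\dim_k F)^2 > \dim_k F$, so there is strictly more room, guaranteeing a $k$-linear map that is not any scalar multiplication, hence not $F$-homogeneous. Making this counting argument fully rigorous in the infinite-dimensional case (using a fixed $k$-basis and defining $\varphi$ explicitly on basis elements) is the step requiring the most care.
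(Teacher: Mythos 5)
Your first part is correct and essentially the paper's own argument: integer homogeneity by induction, $\varphi(0)=0$ and $\varphi(-u)=-\varphi(u)$, then the split into $F\cong\mathbb Z_p$ and $F\cong\mathbb Q$ with the divide-by-$n$ trick (you are in fact slightly more careful than the paper about negative integers). Your reduction of the second part is also sound in principle: for a map $\varphi\colon F\to F$, $F$-homogeneity with $x=1$ gives $\varphi(\lambda)=\lambda\varphi(1)$, so any $k$-linear endomorphism of $F$ that is not a scalar multiplication witnesses additive-but-not-homogeneous, exactly as in the paper.

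The genuine gap is your existence argument for such an endomorphism. The counting claim $(\dim_k F)^2>\dim_k F$ is false whenever $\dim_k F$ is infinite, since $\kappa\cdot\kappa=\kappa$ for every infinite cardinal $\kappa$; and the infinite-dimensional case is not exotic here --- it includes $F=\mathbb R$ or $F=\mathbb C$ over $k=\mathbb Q$, and $F=k(t)$ in any characteristic. (In that case $\dim_k\operatorname{Hom}_k(F,F)$ is not $(\dim_k F)^2$ anyway; the strict inequality you want is true, but proving it needs something like the Erd\H{o}s--Kaplansky theorem, $\dim_k V^*=|k|^{\dim_k V}$, which is far heavier machinery than the problem requires.) You flag the infinite-dimensional case as ``requiring the most care'' and gesture at the right repair --- defining $\varphi$ explicitly on a basis --- but you never carry it out, and that unexecuted step is precisely where the content of the theorem lies. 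The paper's construction is short and works uniformly in all dimensions: pick $f\in F\setminus k$, note $f\ne 0,1$ and that $\{1,f\}$ is $k$-independent, extend to a $k$-basis $B$ of $F$, and let $\tilde\varphi$ be the $k$-linear extension of $\varphi(b)=f$ for all $b\in B$. Then $\tilde\varphi(1)=\tilde\varphi(f)=f$, whereas $F$-homogeneity would force $\tilde\varphi(f)=f\,\tilde\varphi(1)=f^2$, and $f^2\ne f$ because $f\notin\{0,1\}$. Replacing your cardinality count with this explicit basis construction closes the gap; as written, your proof establishes the second claim only when $\dim_k F$ is finite.
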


\begin{proof}
Let $F=k$ and $\varphi\colon U\to V$ be additive mapping of the vector $F$-space $U$ to vector $F$-space $V$. Then well known arguments show that $\varphi$ is $F$-homogeneous.

[For the sake of completeness, recall these arguments. In case $F=k={\mathbb{Z}}_p$ for some prime number $p$, every $\lambda\in F$ has form $\bar{m}$ with $m\in\{1,\ 2,\ldots, p\}$, so
\[
  \varphi(\lambda x) = \varphi(\underbrace{x+\ldots+x}_{m\ \text{times}})= \underbrace{\varphi(x)+\ldots+\varphi(x)}_{m\ \text{times}}=\lambda\varphi(x).
\]

In case $F=k={\mathbb{Q}}$, for every $\lambda\in F$ there exist $m,\ n\in\mathbb{Z}$ such that $n\ne 0$ and $\lambda = \frac{m}{n}$. Due to this we have, first, for every $m$ and $n$,
\begin{equation}\nonumber
  \varphi(\lambda x) = \varphi\Bigl(\frac{m}{n} x\Bigr) = \varphi\Biggl(\Bigl(\underbrace{\frac{1}{n}+\ldots\frac{1}{n}}_{m\ \text{times}}\Bigr)x\Biggr) = \underbrace{\varphi\Bigl(\frac{1}{n}x\Bigr)+\ldots+\varphi\Bigl(\frac{1}{n}x\Bigr)}_{m\ \text{times}} = m\varphi\Bigl(\frac{1}{n}x\Bigr).
\end{equation}
Then, second,
\begin{equation}\nonumber
  \varphi(x) = \varphi\Bigl(\frac{n}{n} x\Bigr) = n\varphi\Bigl(\frac{1}{n}x\Bigr),
\end{equation}
whence
\begin{equation}\nonumber
  \varphi\Bigl(\frac{1}{n}x\Bigr) = \frac{1}{n}\varphi(x).
\end{equation}
And lastly, in combination with the first, this yields that
\begin{equation}\nonumber
  \varphi\Bigl(\frac{m}{n} x\Bigr) = \frac{m}{n} \varphi(x),
\end{equation}
that is,
\begin{equation}\nonumber
  \varphi(\lambda x) = \lambda\varphi(x).]
\end{equation}

Now, consider the case of non-prime field: let $F\ne k$. Then there exists an $f\in F$ such that $f\notin k$. Clearly, \mbox{$f\ne 0,\ 1$}, and the set $\{1,\ f\}$ is $k$-independent. Extend this set to a basis $B$ of the \mbox{$k$-space}~$F$ (assuming Axiom of Choice, Zorn's lemma or something like). Define $\varphi\colon B\to F$ by
\begin{equation}\nonumber
  \varphi(b)= f\quad \text{for all } b\in B.
\end{equation}
The mapping $\varphi$ has a $k$-linear extension, say $\tilde{\varphi}$, to the $k$-space $F$. This mapping,
$\tilde{\varphi}$, is an additive mapping of $F$-space $F$ to itself which is not $F$-homogeneous
(and therefore not $F$-linear).

In fact, on the one hand, $\tilde\varphi(1) = \tilde{\varphi}(f) = f$ (because $\tilde{\varphi}$ is
an extension of $\varphi$, and as such it coincides with $\varphi$ on $B$). But, on the other hand,
if $\tilde{\varphi}$ is $F$-homogeneous, then there must be $\tilde{\varphi}(f) = \tilde{\varphi}(f\cdot 1) = f\cdot
\tilde{\varphi}(1) = f\cdot f = f^2$.

However, $f^2\ne f$, because
\begin{equation}\nonumber
  f^2=f\leftrightarrow f^2-f=0 \leftrightarrow f\cdot(f-1)=0 \leftrightarrow f=0 \text{ or } f=1,
\end{equation}
what is not the case here.
\end{proof}

\subsection{From (H) to (A)}
Now we show that homogeneity does not imply additivity. The question,
\begin{quote}
  Does homogeneity imply additivity?
\end{quote}
was placed by the author in ResearchGate. For two years it reached 877 reads, 7 followers and 4 recomendations, and remained unanswered. Here we give full answer to the question.
\begin{theorem}
For every field $F$ there exist vector $F$-spaces $U$ and $V$ and a mapping $\varphi\colon U\to V$ that is $F$-homogeneous but not additive.
\end{theorem}

\begin{proof}
Let $F$ be arbitrary field. Take $U=V=F^2$ (two-dimensional arithmetic space over $F$). Let $\{e_1,\ e_2\}$ be standard basis of $U$: $e_1=(1,\ 0)$, $e_2=(0,\ 1)$. Define $\varphi\colon U\to V$ by the rule
\begin{equation}\nonumber
  \varphi(u)=\begin{cases}
   u, &\text{if } u\in F\cdot\{e_1,\ e_2\};\\
   (0,\ 0), &\text{otherwise}.
  \end{cases}
\end{equation}
\textbullet\ \ So defined $\varphi$ is $F$-homogeneous. In fact, let $u\in U$ and $\lambda\in F$. If $u\in F\cdot\{e_1,\ e_2\}$, then also $\lambda u\in F\cdot\{e_1,\ e_2\}$, and we have
\[
  \varphi(\lambda u) = \lambda u = \lambda\varphi(u).
\]
Now, let $u\notin F\cdot\{e_1,\ e_2\}$. Then $u=(\alpha_1,\ \alpha_2)$ with $\alpha_1\alpha_2\ne0$ and $\varphi(u)=(0,\ 0)$. At the same time, either $\lambda=0$, or $\lambda\ne 0$. If $\lambda=0$, then
\[
  \lambda u = (0,\ 0) = 0\cdot e_1\in F\cdot\{e_1,\ e_2\},
\]
and
\[
  \varphi(\lambda u) = \lambda u = (0,\ 0)=0\cdot(0,\ 0) = \lambda\varphi(u).
\]
At last, if $\lambda\ne 0$, then also $\lambda^2\ne 0$, and
\[
  0\ne\lambda^2\cdot\alpha_1\alpha_2 = (\lambda\alpha_1)(\lambda\alpha_2).
\]
This means that $\lambda u = (\lambda\alpha_1, \lambda\alpha_2)\notin F\cdot\{e_1,\ e_2\}$, so
\[
  \varphi(\lambda u) = (0,\ 0) = \lambda\cdot(0,\ 0)=\lambda\varphi(u).
\]
The $F$-homogeneity of $\varphi$ is proven.

\textbullet\ However, this $\varphi$ is not additive:
\begin{equation}\nonumber
  \varphi(e_1+e_2)=\varphi((1,\ 1))=(0,\ 0) \ne (1,\ 1) = (1,\ 0)+(0,\ 1)=e_1+e_2=\varphi(e_1)+\varphi(e_2).\qedhere
\end{equation}
\end{proof}

{\sc Gratitude.} The author is very grateful to Greg Oman of the U Colorado for comments and suggestions that have contributed to the improvement of the article.

\bibliographystyle{amsplain}

\end{document}